\newif\ifpictures
\numberwithin{equation}{section}
\newtheorem{thm}{Theorem}
\newtheorem*{thm*}{Theorem}
\newtheorem{lemma}[thm]{Lemma}
\newtheorem{conj}[thm]{Conjecture}
\newtheorem{definition}[thm]{Definition}
\newtheorem*{question}{Open Problem}
\newtheorem*{definition*}{Definition}
\newtheorem*{cor*}{Corollary}
\newtheorem{remark}[thm]{Remark}
\numberwithin{thm}{section}
\numberwithin{definition}{section}
\numberwithin{lemma}{section}
\numberwithin{cor}{section}
\numberwithin{example}{section}
\numberwithin{prop}{section}
\numberwithin{conj}{section}
\numberwithin{remark}{section}
\DeclareMathOperator{\codim}{codim}
\numberwithin{equation}{section}
\newcommand{\Osh}{{\mathcal O}}
\newcommand{\Sym}{\operatorname{Sym}}
\newcommand{\PP}{\mathbb{P}}
\newcommand{\CC}{\mathbb{C}}
\newcommand{\RR}{\mathbb{R}}
\newcommand{\sI}{\ensuremath{\kern -1pt \mathscr{I}\kern -2pt}} 
\newcommand{\sJ}{\ensuremath{\kern -2pt \mathscr{J}\kern -2pt}}
\begin{document}

\title[]{Gap vectors of real projective varieties.}

\author[G.~Blekherman]{Grigoriy Blekherman}
\address{Greg Blekherman\\ School of Mathematics \\ Georgia Tech, 686 Cherry
  Street\\ Atlanta\\ GA\\ 30332\\ USA}
\email{\href{mailto:greg@math.gatech.edu}{greg@math.gatech.edu}}

\author[S.~Ilman]{Sadik Iliman}
\address{Sadik Iliman\\ FB 12 - Institut f\"ur Mathematik\\ Goethe-Universit\"at\\ Robert-Mayer-Strasse 10\\ 60325 Frankfurt am Main}
\email{\href{mailto:iliman@math.uni-frankfurt.de}{iliman@math.uni-frankfurt.de}}

\author[M.~Juhnke-Kubitzke]{Martina Juhnke-Kubitzke}
\address{Martina Juhnke-Kubitzke \\ FB 12 - Institut f\"ur Mathematik\\ Goethe-Universit\"at\\ Robert-Mayer-Str. 10\\60325 Frankfurt am Main}
\email{\href{mailto: kubitzke@math.uni-frankfurt.de}{kubitzke@math.uni-frankfurt.de}}

\author[M.~Velasco]{Mauricio Velasco} \address{Mauricio Velasco\\ Departamento
  de Matem\'aticas\\ Universidad de los Andes\\ Carrera 1 No. 18a 10\\ Edificio
  H\\ Bogot\'a\\ Colombia}
\email{\href{mailto:mvelasco@uniandes.edu.co}{mvelasco@uniandes.edu.co}}

\subjclass[2010]{14P99, 14M99, 52A99}

\begin{abstract} Let $X\subseteq \PP^m$ be a totally real, non-degenerate, projective variety and let $\Gamma\subseteq X(\RR)$ be a generic set of points. Let $P$ be the cone of nonnegative quadratic forms on $X$ and let $\Sigma$ be the cone of sums of squares of linear forms. We examine the dimensions of the faces $P(\Gamma)$ and $\Sigma(\Gamma)$ consisting of forms in $P$ and $\Sigma$, which vanish on $\Gamma$. As the cardinality of the set $\Gamma$ varies in $1,\dots,\codim(X)$, the difference between the dimensions of $P(\Gamma)$ and $\Sigma(\Gamma)$ defines a numerical invariant of $X$, which we call the gap vector of X. In this article we begin a systematic study of its fundamental properties. Our main result is a formula relating the components of the gap vector of $X$ and the quadratic deficiencies of $X$ and its generic projections. The quadratic deficiency is a fundamental numerical invariant of projective varieties introduced by F. L. Zak. The relationship between quadratic deficiency and gap vectors allows us to effectively compute the gap vectors of concrete varieties as well as to prove several general properties. We prove that gap vectors are weakly increasing, obtain upper bounds for their rate of growth and prove that these upper bounds are eventually achieved for all varieties. Moreover, we give a characterization of the varieties with the simplest gap vectors: We prove that the gap vector vanishes identically precisely for varieties of minimal degree, and characterize the varieties whose gap vector equals $(0,\dots, 0,1)$. In particular, our results give a new proof of the theorem of Blekherman, Smith and Velasco saying that there are nonnegative quadratic forms which are not sums of squares on every variety, which is not of minimal degree. Finally, we determine the gap vector of all Veronese embeddings of $\PP^2$, generalizing work of the first three authors.\end{abstract}

\maketitle

\section{Introduction}

Let $X\subseteq \PP^m$ be a totally real, non-degenerate, projective variety with homogeneous coordinate ring $R$. The vector space $R_2$ of real quadratic forms on $X$ contains two natural convex cones: the cone $P$ of nonnegative quadratic forms and the cone $\Sigma$ of sums of squares of linear forms. The relationship between these two cones has been an object of much interest since the time of Hilbert~\cite{Hilbert}. Recent results (see~\cite{Blekherman} and \cite{BSV}) suggest that this relationship is in fact controlled by algebro-geometric features of the variety $X$ over $\CC$. A more complete understanding of the relationship between the algebraic geometry of $X$ and the convex geometry of the cones $\Sigma$ and $P$ is, in our view, an intrinsically interesting problem with the potential for bringing new insights and tools from algebraic geometry to optimization. The study of this relationship is arguably the central problem of the emerging field of {\it Convex Algebraic Geometry}~\cite{BPT}. The purpose of this article is to contribute to this line of research by asking: What geometric features of $X$ control the dimensions of ``generic'' exposed faces of $P$ and $\Sigma$? 
To more precisely formulate this question we introduce the gap vector, a new numerical invariant of real projective varieties,
\begin{definition}  For a set $\Gamma\subseteq X(\RR)$, let $\Sigma(\Gamma)$ and $P(\Gamma)$ consist of forms in $\Sigma$ and $P$, which vanish on $\Gamma$. For $1\leq j\leq c:=\codim(X)$, we define
$g_j(X):=\dim (P(\Gamma))-\dim(\Sigma(\Gamma))$, where $\Gamma\subseteq X(\RR)$ is a generic set of points of cardinality $j$. These numbers are arranged in the gap vector of $X$, denoted by
$g(X):=(g_1(X),\dots, g_c(X))$.\end{definition}

The purpose of the article is to characterize the fundamental properties of the gap vector. We demonstrate that this invariant is determined by the geometry of the variety $X$ over $\mathbb{C}$ and that it has strong connections to classical topics in algebraic geometry, such as the dimensions of secant varieties, the geometry of generic projections and the quadratic deficiency of a variety. 

A non-vanishing gap vector gives a certificate for strict inclusion of $\Sigma$ in $P$. Using dimensional differences to certify this strict inclusion has a long history starting with Hilbert's original paper \cite{Hilbert} on globally nonnegative polynomials, which launched the investigation of connections between nonnegativity and sums of squares. The dimensional difference aspect of Hilbert's approach was made clear by a modern exposition of Hilbert's method due to Reznick \cite{Rez}. We note that globally nonnegative forms of degree $2d$ correspond to the special case where $X=\nu_d(\PP^n)$ is the $d$\textsuperscript{th} Veronese embedding of $\PP^n$. For this situation dimensional differences between $P$ and $\Sigma$ were studied by the first three authors in \cite{BIK}. One of the objectives of the present article is to refine and generalize these results to projective varieties. 

There are several advantages of working with arbitrary projective varieties. First, to understand nonnegative forms and sums of squares of degree $2d$ on a variety $X$, it suffices to study the cones $P$ and $\Sigma$ on the variety $Y=\nu_d(X)$. Second, the setting of projective varieties and gap vectors gives us both a language to describe interesting properties of the relationship between nonnegative polynomials and sums of squares, and a context in which to look for answers. For instance, in this paper we answer the following questions:
\begin{itemize}
\item{For which varieties do we have a generic dimensional certificate that $\Sigma\neq P$? For which varieties does the gap vector have non-zero entries?}
\item{For which varieties do we have $\Sigma\neq P$ and this difference is as small as possible? For which varieties is the gap vector non-vanishing and of the form $(0,\dots,0,1)$?}
\item{How do the dimensions of generic faces $\Sigma(\Gamma)$ and $P(\Gamma)$ change as $|\Gamma|$ increases? Is the gap vector monotonic and if so, what is its growth rate?}
\end{itemize}

The answer to the questions above is a first step towards understanding the general properties of gap vectors. In our opinion a complete determination of this invariant on real projective varieties is a problem of much interest, which, we suspect, would also bring new connections between convex and classical algebraic geometry.

In the remainder of this section, we describe the organization and the main results of the article. In Section~$\S\ref{FacesP}$ we generalize from \cite{BIK} a geometric property of nonempty finite sets $\Gamma\subseteq X$, called {\it independence}.

\begin{definition*} Let $\Gamma\subseteq X\subseteq \PP^m$ be a finite set of nonsingular points and let  $\langle \Gamma\rangle$ be the projective subspace of $\PP^m$ spanned by $\Gamma$. The set $\Gamma$ is independent if
\begin{enumerate}
\item{the (set-theoretic) equality $\langle \Gamma\rangle \cap X=\Gamma$ holds, and}
\item{ for every point $p\in \Gamma$, the vector space $T_pX\cap \langle \Gamma\rangle$ is trivial, where $T_p$ is the tangent space at $p$.}
\end{enumerate}
\end{definition*}

Independent sets of points are useful for two reasons. First, Theorem~\ref{thm:DimP} shows that if $\Gamma\subseteq X(\RR)$ is an independent set of points, then $P(\Gamma)$ is full-dimensional in the vector space of quadratic forms on $X$, which vanish at the points of $\Gamma$ with multiplicity two. If $\Gamma$ is generic, then the dimension of this vector space, and hence the dimension of $P(\Gamma)$, is determined by that of the $(|\Gamma|-1)$\textsuperscript{st} secant variety of $\nu_2(X)$. Second, independent sets are abundant in the sense that a {\it generic} set of points of size at most $\codim(X)$ satisfies the independence property (see  Lemma~\ref{MaxGI}). These two aspects of independence enable us to compute the dimension of $P(\Gamma)$. More precisely, the following theorem holds,
\begin{restatable}{thm}{DimP}\label{thm:DimP}  Let $X\subseteq \PP^m$ be a non-degenerate and totally real variety of dimension $d$. 
 Let $\Gamma\subseteq X(\RR)$ be a generic and independent set of points of size $s$. The following equality holds, 
\[\dim(P(\Gamma))= \dim(R_2)-s(\dim(X)+1).\]
\end{restatable}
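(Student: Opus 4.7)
My plan is to reduce the theorem to two separate assertions and then combine them. Let $V(\Gamma)\subseteq R_2$ denote the vector space of quadratic forms that vanish to order $2$ at every point of $\Gamma$ (i.e.\ the form and all its tangential derivatives vanish at each $p_i\in\Gamma$). Since a nonnegative quadratic form on $X(\RR)$ that vanishes at a smooth real point is necessarily critical there, its gradient along $X$ must also vanish, so $P(\Gamma)\subseteq V(\Gamma)$. The theorem will follow from (a) $\dim V(\Gamma)=\dim(R_2)-s(\dim(X)+1)$ and (b) $P(\Gamma)$ is full-dimensional in $V(\Gamma)$.

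For (a), each point $p_i$ contributes $\dim(X)+1$ linear conditions on $R_2$ (the value at $p_i$ and the $\dim(X)$ directional derivatives along $T_{p_i}X$), so $\dim V(\Gamma)\geq \dim(R_2)-s(\dim(X)+1)$. Equality requires these $s(\dim(X)+1)$ conditions to be linearly independent, which by Terracini's lemma is equivalent to the $(s-1)$-st secant variety of $\nu_2(X)\subseteq\PP(R_2^*)$ attaining its expected dimension at the secant span of $\nu_2(\Gamma)$. For generic $\Gamma$ of size $s\leq\codim(X)$ this expected dimension is realized, and this is precisely where the genericity hypothesis enters.

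For (b), I would construct an explicit interior point of $P(\Gamma)$ relative to $V(\Gamma)$. Choose a basis $\ell_1,\dots,\ell_k$ of the real linear forms on $\PP^m$ vanishing on the projective span $\langle\Gamma\rangle$, and set $q:=\ell_1^2+\cdots+\ell_k^2$. Then $q\geq 0$ identically and vanishes on $\langle\Gamma\rangle$, so $q\in V(\Gamma)$. Independence condition $(1)$ gives $\{q=0\}\cap X(\RR)=\langle\Gamma\rangle\cap X(\RR)=\Gamma$, so $q$ is strictly positive on $X(\RR)\setminus\Gamma$. Independence condition $(2)$ gives $\bigcap_i\ker(d\ell_i|_{T_{p}X})=T_pX\cap\langle\Gamma\rangle=0$ for every $p\in\Gamma$, so the Hessian $\sum_i (d\ell_i|_p)^{\otimes 2}$ of $q|_X$ at $p$ is positive definite on $T_pX$.

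Given any $f\in V(\Gamma)$, on the compact complement of a fixed small neighborhood of $\Gamma$ in $X(\RR)$ the form $q$ is bounded below by a positive constant, so $q+\eps f>0$ there for $|\eps|$ small. Near each $p\in\Gamma$ both $q$ and $f$ vanish to order $\geq 2$, and since the Hessian of $q|_X$ at $p$ is positive definite, the Hessian of $(q+\eps f)|_X$ at $p$ remains positive definite for $|\eps|$ small, making $q+\eps f$ nonnegative in a neighborhood of $p$. Hence $q+\eps f\in P(\Gamma)$ for sufficiently small $\eps$, which proves $q$ is interior to $P(\Gamma)$ in $V(\Gamma)$. The main obstacle is (a): linear independence of the double-point conditions hinges on a non-defectivity statement for secant varieties of $\nu_2(X)$, a subtle generic property; by contrast (b) is cleanly controlled by the two explicit clauses of the definition of independence, which are engineered precisely so that the construction $q=\sum_i\ell_i^2$ lies in the relative interior of $P(\Gamma)$.
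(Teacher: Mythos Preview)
Your proposal is correct and follows essentially the same approach as the paper: the paper also splits the argument into (i) showing $P(\Gamma)$ is full-dimensional in the space $B$ of quadrics vanishing to order two on $\Gamma$ via the explicit sum of squares $r=\sum s_i^2$ built from a basis of the linear series vanishing on $\Gamma$ (this is Theorem~\ref{DimP1}), and (ii) computing $\dim B$ via Terracini's lemma and non-defectivity of ${\rm Sec}^{s-1}(\nu_2(X))$. The one point you flag as the ``main obstacle'' --- the non-defectivity of the relevant secant variety for $s\leq\codim(X)+1$ --- is exactly what the paper does not reprove either; it is imported from \cite[Theorem~3.3]{Zak}, together with Lemma~\ref{MaxGI} to ensure a generic independent $\Gamma$ indeed satisfies the size bound.
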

Next, we study the dimension of the cone $\Sigma(\Gamma)$ and prove that it is determined by the image of $X$ via the projection away from $\Gamma$. More specifically, recall that if $V\subseteq R_1$, then there is a rational map $\phi: \PP^m\dasharrow \PP(V^*)$, which sends a point $p\in \PP^m$ to the projectivization of the linear functional $\ell \in V^*$ defined by $\ell(f)=f(\bar{p})$, where $\bar{p}$ is an affine representative of $p$. In case $V$ is the vanishing linear series of a set of points $\Gamma \subseteq X$ the map $\phi$ is the projection away from $\Gamma$, denoted by $\pi_{\Gamma}$. We have the following theorem:

\begin{restatable}{thm}{DimGamma} \label{thm:DimS} Let $\Gamma\subseteq X(\RR)$ be a finite set of points and let $V\subseteq R_1$ be the linear series consisting of forms vanishing at all points of $\Gamma$. Let $\pi_{\Gamma}: \PP^m\dasharrow \PP(V^*)$ be the rational map determined by $V$ , let $Y$ be the image of $X$ under $\pi_{\Gamma}$ and let $S$ be the homogeneous coordinate ring of $Y$. Then we have $\dim\Sigma(\Gamma)=\dim(S_2)$.
\end{restatable}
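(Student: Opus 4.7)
The plan is to realize both $\Sigma(\Gamma)$ and $S_2$ as images of the same linear map and compare dimensions.

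First, I would rewrite $\Sigma(\Gamma)$ as the cone of sums of squares of linear forms drawn from $V$. Because $\Gamma$ consists of real points, a sum $\sum_i \ell_i^2$ of squares of real linear forms on $X$ vanishes at a point $p\in\Gamma$ if and only if each $\ell_i(p)=0$, that is, each $\ell_i\in V$. Let $\mu\colon \Sym^2 V \to R_2$ denote the multiplication map. By the polarization identity $2\ell\ell' = (\ell+\ell')^2 - \ell^2 - (\ell')^2$, the real linear span of $\Sigma(\Gamma)$ inside $R_2$ equals $\im(\mu)$. Since $\Sigma(\Gamma)$ is a convex cone with nonempty relative interior in its span (every positive combination of squares of a basis of $V$ lies in the interior), we obtain $\dim\Sigma(\Gamma) = \dim \im(\mu)$.

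Next, I would identify $S_2$ with $\im(\mu)$ as well. Fixing a basis of $V$ identifies $\Sym^\bullet V$ with the homogeneous coordinate ring of $\PP(V^*)$, and the inclusion $V\hookrightarrow R_1$ is precisely the pullback of linear coordinates on $\PP(V^*)$ via $\pi_\Gamma$; in degree two this pullback is exactly $\mu$. The ideal $I(Y)$ defining $Y=\overline{\pi_\Gamma(X)}$ is, by definition, the kernel of pullback on homogeneous coordinate rings, so in degree two $I(Y)_2 = \ker(\mu)$. Hence $S_2 = \Sym^2 V/I(Y)_2 \cong \im(\mu)$, and combining with the previous step yields $\dim \Sigma(\Gamma) = \dim S_2$.

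The step requiring the most care is the identification $\ker(\mu) = I(Y)_2$: the map $\pi_\Gamma$ is only rational, with base locus containing $\Gamma$, so one must justify equating ``vanishing on the closure of the image'' with ``vanishing of the pullback in $R_2$''. This is handled by noting that $\pi_\Gamma$ is defined and dominant onto $Y$ on a dense open subset $U\subseteq X$, so a quadric $q\in \Sym^2 V$ vanishes on $Y$ if and only if $\mu(q)$ vanishes on $U$, and by irreducibility of $X$ this is equivalent to $\mu(q)=0$ in $R_2$. The remaining steps are formal consequences of polarization and the standard description of the ideal of the image of a rational map.
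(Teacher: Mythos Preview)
Your proof is correct and follows essentially the same approach as the paper. Both arguments reduce $\Sigma(\Gamma)$ to sums of squares of forms in $V$, identify this with the cone of sums of squares in $S_2$ via the pullback $\pi_\Gamma^*$, and then argue that squares span $S_2$; you invoke polarization where the paper cites non-degeneracy of the second Veronese, and you are somewhat more explicit about why the kernel of $\mu$ equals $I(Y)_2$ despite $\pi_\Gamma$ being only rational, but these are cosmetic differences.
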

In Section~$\S\ref{DGepsilon}$, we combine Theorem \ref{thm:DimP} and Theorem \ref{thm:DimS} to obtain our main result, which is a concrete formula for the entries of gap vectors of projective varieties. Crucially, we are able to express the entries of $g(X)$ just in terms of quadratic deficiencies. Recall that the quadratic deficiency is a key invariant of projective varieties defined by F. L. Zak in~\cite{Zak} in the following way.

\begin{definition*} Let $X\subseteq \PP^m$ be a non-degenerate variety of dimension $d$ and codimension $c$. Let $I$ be the ideal of definition of $X$, and let $R$ be its homogeneous coordinate ring. The quadratic deficiency of $X$ is the number
\[ \epsilon(X):= \binom{c+1}{2}-\dim(I_2)=\dim(R_2)-(m+1)(d+1)+\binom{d+1}{2}.\]
\end{definition*}

\begin{restatable}{thm}{DimGap} \label{DimGap}Let $\Gamma\subseteq X(\RR)$ be a generic set of points of cardinality $j\leq \codim(X)$ and let $Y$ be the image of $X$ under projection away from $\Gamma$. The dimensional gap equals
\[g_j(X)=\dim(P(\Gamma))-\dim(\Sigma(\Gamma))= \epsilon(X)-\epsilon(Y).\]
\end{restatable}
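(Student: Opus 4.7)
The plan is to combine the two dimension formulas \ref{thm:DimP} and \ref{thm:DimS} into the quadratic deficiency formula and simplify. First, since $|\Gamma|=j\leq\codim(X)$ and $\Gamma$ is generic, Lemma~\ref{MaxGI} guarantees that $\Gamma$ is independent. Thus Theorem~\ref{thm:DimP} applies and gives
\[\dim(P(\Gamma))=\dim(R_2)-j(d+1),\]
where $d=\dim(X)$. Simultaneously, Theorem~\ref{thm:DimS} yields
\[\dim(\Sigma(\Gamma))=\dim(S_2),\]
where $S$ is the homogeneous coordinate ring of the image $Y$ of $X$ under the projection $\pi_\Gamma$.

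Next, I need to identify the ambient space and dimension of $Y$ so that the defining formula for $\epsilon(Y)$ can be applied. The linear series $V\subseteq R_1$ of forms vanishing on $\Gamma$ has dimension $m+1-j$ (since $j$ generic points impose independent linear conditions on $R_1$), so $Y\subseteq \PP^{m-j}$. Non-degeneracy of $X$ ensures that no nonzero element of $V$ vanishes on $X$, and hence none vanishes on $Y$, so $Y$ is non-degenerate in $\PP^{m-j}$. Finally, the independence condition $\langle\Gamma\rangle\cap X=\Gamma$ together with the tangent-space condition implies that $\pi_\Gamma|_X$ is generically injective, so $\dim(Y)=d$. The most delicate step, and the one I expect to be the main technical obstacle, is verifying that the genericity and independence of $\Gamma$ really do force these three properties (ambient dimension $m-j$, non-degeneracy, and $\dim Y=d$) on the projected variety; once this is granted the rest of the argument is purely bookkeeping.

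With these identifications in hand, the definition of quadratic deficiency gives
\[\dim(R_2)=\epsilon(X)+(m+1)(d+1)-\binom{d+1}{2}\]
and
\[\dim(S_2)=\epsilon(Y)+(m-j+1)(d+1)-\binom{d+1}{2}.\]
Substituting into the difference,
\[g_j(X)=\dim(P(\Gamma))-\dim(\Sigma(\Gamma))=\bigl(\dim(R_2)-j(d+1)\bigr)-\dim(S_2),\]
the terms $(m+1)(d+1)-j(d+1)-(m-j+1)(d+1)$ cancel, as do the two copies of $\binom{d+1}{2}$, leaving precisely $g_j(X)=\epsilon(X)-\epsilon(Y)$, as claimed.
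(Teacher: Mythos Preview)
Your proof is correct and follows the same approach as the paper's own proof, which simply rewrites $\dim(P(\Gamma))$ and $\dim(\Sigma(\Gamma))$ via the quadratic-deficiency identity and subtracts. In fact your version is more detailed: the paper leaves implicit precisely the points you flag as delicate (that $Y\subseteq\PP^{m-j}$ is non-degenerate of dimension $d$), so your write-up only expands on what the paper takes for granted.
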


This theorem has several consequences. It provides us with an algorithmic method to compute gap vectors, and allows us to use known constraints on quadratic deficiencies to derive several fundamental restrictions on the combinatorics of gap vectors. These topics are explored in Section~$\S\ref{GapVectors}$. To describe such restrictions, recall that any non-degenerate variety $X\subseteq \PP^m$ satisfies  $\deg(X)\geq \codim(X)+1$ and that $X$ is called of minimal degree if equality is achieved. In Section~$\S\ref{GapVectors}$ we prove the following facts about gap vectors:

\begin{restatable}{thm}{PropGap} \label{gapVector}
Let $c:=\codim(X)$. The  gap vector $g(X)$ has the following properties,
\begin{enumerate}
\item{ $g(X)$ has nonnegative entries and is weakly increasing.}
\item{ $g_{c}(X)=\epsilon(X)$.}
\item{$g_{c-1}(X)=\begin{cases} 0, &\mbox{ if } X \mbox{ is a variety of minimal degree}\\ \epsilon(X)-1,&\mbox{ otherwise}\end{cases}$}
\item{ The following statements hold for $1\leq j\leq c-1$:
\begin{enumerate}
\item{ $g_{j+1}(X)-g_j(X)\leq c-j$}
\item{ Equality in (a) holds if and only if there are no quadrics in the ideal of the variety $Y_j$, obtained by projecting $X$ away from a generic subset $\Gamma\subseteq X(\RR)$ of cardinality $j$.}
\item{ If for some index $s$ we have $g_{s+1}(X)-g_s(X)=c-s$, then $g_{j+1}(X)-g_j(X)= c-j$ for all $j\geq s$.}
\end{enumerate}
}
\end{enumerate}
\end{restatable}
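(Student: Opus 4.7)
The plan is to apply Theorem~\ref{DimGap} to translate every assertion of the theorem into one about quadratic deficiencies of the iterated generic projections of $X$. Set $Y_0 := X$ and, for $1 \leq j \leq c$, let $Y_j \subseteq \PP^{m-j}$ denote the image of $X$ under projection from $j$ generic points, so that $Y_j$ has codimension $c-j$. Expanding the binomials in the definition of $\epsilon$ gives the master identity
\[
g_{j+1}(X) - g_j(X) \;=\; \epsilon(Y_j) - \epsilon(Y_{j+1}) \;=\; (c-j) - \dim I(Y_j)_2 + \dim I(Y_{j+1})_2,
\]
where $I(\cdot)$ denotes the homogeneous vanishing ideal. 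Every part of the theorem will be read off from this formula.

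The main technical input is a pullback lemma, which I will establish first. Let $p$ be the last point of the generic configuration and let $\pi_p: \PP^{m-j}\dasharrow \PP^{m-j-1}$ be the projection from $p$. Then $\pi_p^*$ on degree $2$ forms is a linear isomorphism onto the quadrics lying in $\mathfrak{m}_p^2$, i.e.\ those singular at $p$. This induces an injection $I(Y_{j+1})_2 \hookrightarrow I(Y_j)_2$ whose image is precisely the singular-at-$p$ quadrics in $I(Y_j)_2$. Differentiating $Q|_{Y_j}\equiv 0$ shows that the gradient of any $Q\in I(Y_j)_2$ at $p$ already vanishes on $T_pY_j$, a linear space of dimension $d+1$, so "singular at $p$" imposes at most $c-j$ further linear conditions on $I(Y_j)_2$. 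This yields
\[
0 \;\leq\; \dim I(Y_j)_2 - \dim I(Y_{j+1})_2 \;\leq\; c-j,
\]
and combining with the master identity immediately gives the weak increase in (1) and the upper bound in (4a). The nonnegativity half of (1) is direct from $\Sigma(\Gamma) \subseteq P(\Gamma)$, and (2) follows because $Y_c = \PP^d$ has $\epsilon(Y_c) = 0$.

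For (4b), equality in (4a) forces $\dim I(Y_j)_2 = \dim I(Y_{j+1})_2$, i.e.\ every quadric in $I(Y_j)_2$ is singular at a generic $p \in Y_j$. If $I(Y_j)_2$ contained a nonzero $Q$, its singular locus would be a proper linear subspace of $\PP^{m-j}$, and since $Y_j$ is non-degenerate this subspace cannot contain $Y_j$, contradicting the conclusion. Statement (4c) then follows by iterating the pullback injection: once $I(Y_s)_2 = 0$, one has $I(Y_j)_2 = 0$ for all $j \geq s$, and (4b) gives equality at every $j \geq s$.

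For (3) I split into two cases. If $X$ has minimal degree, then $\epsilon(X)=0$ and the weak increase from (1) forces $g(X)\equiv 0$. Otherwise $\deg X\geq c+2$, and iterating the classical fact that projection from a general smooth point of a non-degenerate variety is birational onto its image and drops the degree by one yields $\deg Y_{c-1} = \deg X - (c-1)\geq 3$. Since $Y_{c-1}$ is a non-degenerate irreducible hypersurface in $\PP^{d+1}$, its ideal is principal and generated in degree $\geq 3$, so $I(Y_{c-1})_2 = 0$ and $\epsilon(Y_{c-1})=1$, yielding $g_{c-1}(X)=\epsilon(X)-1$. The main obstacle throughout will be tracking how genericity propagates: at each step I need $Y_j$ to remain irreducible and non-degenerate and the newly chosen point to be smooth on $Y_j$, so that both the pullback description and the singular-locus argument apply uniformly.
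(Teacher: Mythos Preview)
Your proposal is correct and follows essentially the same route as the paper: both reduce everything to Theorem~\ref{DimGap}, set up the identity $g_{j+1}(X)-g_j(X)=(c-j)-\dim I(Y_j)_2+\dim I(Y_{j+1})_2$, and exploit the pullback injection $\pi_p^*\colon I(Y_{j+1})_2\hookrightarrow I(Y_j)_2$. Parts~(2), (3), and (4a)/(4c) are argued exactly as in the paper.

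There are two small differences worth noting. For the weak increase in~(1), the paper observes that $\epsilon(Y_j)-\epsilon(Y_{j+1})$ is itself a gap entry of the projected variety $Y_j$ and hence nonnegative by the containment $\Sigma\subseteq P$; this implicitly requires $Y_j$ to remain totally real and non-degenerate. Your argument is more self-contained: you identify the image of $\pi_p^*$ with the quadrics in $I(Y_j)_2$ singular at $p$, note that any $Q\in I(Y_j)_2$ already has $\nabla Q(p)$ vanishing on $T_pY_j$, and conclude that ``singular at $p$'' cuts down $I(Y_j)_2$ by at most $c-j$ dimensions, giving $g_{j+1}-g_j\geq 0$ directly. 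For~(4b) the paper phrases the obstruction in terms of cone points of the variety cut out by $I(Y_j)_2$ (Lemma~\ref{ConePoints}), whereas you phrase it via the singular locus of an individual nonzero quadric; these are equivalent, since for a single quadric the cone-point locus and the singular locus coincide. Your concern about propagating genericity (irreducibility, non-degeneracy, smoothness of the next point) is legitimate and is handled in the paper at the same level of implicitness you give it.
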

Moreover, we are able to characterize those varieties whose gap vector is as simple as possible.
\begin{restatable}{cor}{PropGapVect}
\label{cor:GapProp}
 The following statements hold,
\begin{enumerate}
\item{ $g(X)\equiv 0$ if and only if  $X$ is a variety of minimal degree.} 
\item{ $g(X)$ has only one nonzero component if and only if $\epsilon(X)=1$ and in that case $g(X)=(0,\dots,0,1)$.}
\end{enumerate}
\end{restatable}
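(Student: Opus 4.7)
The plan is to deduce both statements from Theorem~\ref{gapVector}, combining its bounds on the extreme entries of the gap vector with its monotonicity property.

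For part (1), the forward direction goes as follows. Suppose $X$ is of minimal degree. Property (3) of Theorem~\ref{gapVector} immediately gives $g_{c-1}(X) = 0$, and weak monotonicity together with nonnegativity (property (1)) then forces $g_j(X) = 0$ for all $1 \leq j \leq c-1$. It remains to argue that $g_c(X) = 0$, equivalently $\epsilon(X) = 0$ by property (2). I plan to verify this by a short projection argument: since a variety of minimal degree projected from a generic smooth point is again of minimal degree in $\PP^{m-1}$ (its degree drops by one), iterating $c-1$ times yields a quadric hypersurface $Y_{c-1} \subseteq \PP^{d+1}$. In particular, the ideal of $Y_{c-1}$ contains a nonzero quadric, so property (4b) forbids the upper bound $g_c(X) - g_{c-1}(X) = 1$ from being attained, forcing $g_c(X) = g_{c-1}(X) = 0$. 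For the converse, if $g(X) \equiv 0$ then $g_{c-1}(X) = 0$, and property (3) splits into two cases: either $X$ is of minimal degree (as desired), or $\epsilon(X) = 1$; the latter contradicts $g_c(X) = \epsilon(X) = 0$, so $X$ must be of minimal degree.

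For part (2), the forward direction proceeds similarly. If $g(X)$ has exactly one nonzero entry, weak monotonicity forces this entry to sit at position $c$, so $g_{c-1}(X) = 0$ and $g_c(X) > 0$. Property (3) again splits into two cases: either $X$ is of minimal degree, in which case part (1) already gives $g(X) \equiv 0$, contradicting $g_c(X) > 0$; or $\epsilon(X) = 1$, which combined with property (2) yields $g(X) = (0, \dots, 0, 1)$. Conversely, if $\epsilon(X) = 1$ then part (1) precludes $X$ from being of minimal degree, so by property (3) we have $g_{c-1}(X) = \epsilon(X) - 1 = 0$; monotonicity and nonnegativity then force $g_j(X) = 0$ for $j \leq c-1$, while $g_c(X) = 1$ by property (2).

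The only genuinely subtle point is verifying that a variety of minimal degree has $\epsilon(X) = 0$, which is needed to conclude $g_c(X) = 0$ in the forward direction of~(1). Although this is classical (via the Eagon--Northcott resolution for rational normal scrolls together with the direct description of the Veronese surface in $\PP^5$), the projection argument sketched above keeps the proof entirely within the framework of Theorem~\ref{gapVector}. All other steps are routine bookkeeping with monotonicity combined with the explicit formulas for $g_{c-1}$ and $g_c$.
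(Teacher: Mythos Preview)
Your argument is correct, and for part~(2) it is essentially the same as the paper's. For part~(1), however, you take a genuinely different route. The paper simply cites Zak's characterization that $\epsilon(X)=0$ if and only if $X$ is of minimal degree, and then observes that $g(X)\equiv 0 \iff g_c(X)=0 \iff \epsilon(X)=0$ using only properties~(1) and~(2) of Theorem~\ref{gapVector}. You instead avoid the external citation by arguing internally: for $X$ of minimal degree, iterated generic projection (which, as established in the proof of Theorem~\ref{gapVector}(3), drops both degree and codimension by one at each step) produces $Y_{c-1}$ a quadric hypersurface, whose ideal certainly contains a quadric, whence property~(4b) forces $g_c(X)-g_{c-1}(X)<1$ and thus $g_c(X)=0$. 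This is a nice self-contained argument that in effect rederives one direction of Zak's result from the gap-vector machinery itself; the paper's approach is shorter but leans on an outside source.

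One small caveat: your argument as written tacitly assumes $c\geq 2$, since both property~(3) (which refers to $g_{c-1}$) and property~(4) (stated for $1\leq j\leq c-1$) are vacuous when $c=1$. In that case $X$ is a quadric hypersurface and $\epsilon(X)=\binom{2}{2}-1=0$ directly, so $g_1(X)=0$; you should note this separately.
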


In particular part $(1)$ of the corollary gives a new proof of the theorem of Blekherman, Smith and Velasco~\cite{BSV}, which says that $P\neq\Sigma$ whenever $X$ is not a variety of minimal degree. Part $(2)$ of the corollary characterizes the varieties for which $\Sigma$ and $P$ are closest in terms of gap vector without being equal. 
Finally, in Theorem~\ref{ThmPP2} we explicitly compute the gap vectors of the Veronese embeddings of $\PP^2$ and thus determine the dimensional differences between nonnegative polynomials of degree $2d$ and sums of squares of degree $d$ in three variables vanishing at generic sets of points of size $j$ with $1\leq j\leq \binom{d+2}{2}-3$. Additionally, the case of Veronese embeddings of $\PP^2$ shows that gap vectors can indeed exhibit maximal growth. We also conjecture an explicit formula for gap vectors of Veronese embeddings of $\PP^n$ in Conjecture \ref{conjecture}. 

We conclude with an open problem, whose solution would not only shed further light on the relationship between nonnegative polynomials and sums of squares, but, we suspect, would also bring new connections to classical algebraic geometry.
\begin{question} Describe the gap vectors of non-degenerate, totally real projective varieties. In particular, determine which geometric invariants characterize the smallest index for which the gap vector is non-zero and classify the varieties $X$ with gap vector $(0,\dots,0,1,2).$
\end{question}
\subsection*{Acknowledgements}
We wish to thank the organizing committee of the SIAM Algebraic Geometry Group meeting 2013, which gave us the occasion to start this project. We wish to thank Gregory G. Smith for several helpful conversations during the completion of this project. M. Velasco was partially supported by the FAPA funds from Universidad de los Andes. G. Blekherman was partially supported by an Alfred P. Sloan Research Fellowship and NSF CAREER Award.

\section{Dimensions of faces of the cones $P$ and $\Sigma$}\label{FacesP}
Let $X\subseteq \PP^m$ be a non-degenerate, totally real algebraic variety (i.e., $X$ is a geometrically integral scheme of finite type over ${\rm Spec}(\RR)$ such that the set of real points $X(\RR)$ is Zariski dense in $X$ with a fixed closed immersion $i:X\rightarrow \PP^m_{\RR}$). The variety $X$ is defined by a real-radical homogeneous prime ideal $I\subseteq \RR[x_0,\dots, x_m]$, which contains no linear forms, and we let $R:=\RR[x_0,\dots, x_m]/I$ be the homogeneous coordinate ring of $X$. If $p\in X(\RR)$ and $s\in R_{2d}$ is a form of even degree, which does not vanish at $p$, then $s$ has a well defined sign, either positive or negative, at $p$. The form $s$ is nonnegative on $X$ if at every point $p\in X(\RR)$, $s$ either vanishes or has positive sign. For each $p\in X(\RR)$, we fix a representative $\ell_p\in R_2^*$ obtained by evaluating forms in $R_2$ at a fixed affine representative of $p$. For a set $\Gamma\subseteq \PP^m$, we let $\langle\Gamma\rangle\subseteq \PP^m$ be the projective subspace spanned by $\Gamma$.

\begin{definition} Let $P\subseteq R_2$ be the set of quadratic forms which are nonnegative on $X$ and let $\Sigma\subseteq R_2$ be the set of sums of squares of forms in $R_1$. If $\Gamma\subseteq X(\RR)$ is a finite set of nonsingular points, let $P(\Gamma)$ and $\Sigma(\Gamma)$ be the subsets of $P$ and $\Sigma$ consisting of forms vanishing at all points of $\Gamma$.\end{definition}

\begin{remark} Since $X$ is totally real, the convex cones $P$ and $\Sigma$ are proper cones (i.e., full-dimensional, closed and pointed subsets of $R_2$). The sets $P(\Gamma)$ and $\Sigma(\Gamma)$ are exposed faces of $P$ and $\Sigma$ and, in particular, proper cones themselves, since they lie in the hyperplane $\sum_{p\in \Gamma}\ell_p=0$, which is supporting to both $P$ and $\Sigma$. 
\end{remark}

In this section we study the dimension of the faces $P(\Gamma)$ and $\Sigma(\Gamma)$ reducing their computation to that of some explicit vector spaces. We first focus on the dimension of the cone $\Sigma(\Gamma)$ and prove Theorem \ref{thm:DimS}.
\begin{proof}[Proof of Theorem \ref{thm:DimS}] Let $\pi_{\Gamma}^*: S\rightarrow R$ be the (injective) pull-back map on homogeneous coordinate rings. If $q\in R_2$ is a sum of squares vanishing at the points of $\Gamma$, then each of the linear forms in its summands must vanish at all points of $\Gamma$. It follows that $q=\phi^{*}(a)$ for some sum of squares $a\in S_2$ and that $\Sigma(\Gamma)$ can be identified with the cone of sums of squares in $S_2$. Since the second Veronese variety of a projective space is non-degenerate, the squares in $S_2$ span $S_2$ as a vector space and thus the cone $\Sigma(\Gamma)$ is full-dimensional in $S_2$ as claimed. 
\end{proof}

Next, we focus on the dimension of the faces $P(\Gamma)$ for sets of points $\Gamma$ satisfying the following additional key property:

\begin{definition} Let $\Gamma\subseteq X$ be a finite set of nonsingular points. $\Gamma$ is independent if
\begin{enumerate}
\item{ the (set-theoretic) equality $\langle \Gamma\rangle \cap X=\Gamma$ holds, and}
\item{ for every point $p\in \Gamma$, the vector space $T_pX\cap \langle \Gamma\rangle$ is trivial.}
\end{enumerate}
\end{definition}

Dually, independence of a set of points can be seen from the linear series given by the forms in $R_1$, which vanish at the points of $\Gamma$.
\begin{lemma}\label{DualIndependence}
A finite set of nonsingular points $\Gamma\subseteq X(\RR)$ is independent if and only if the linear series $V\subseteq R_1$ consisting of forms vanishing at all points of $\Gamma$ satisfies the following two properties:
\begin{enumerate}
\item{ $V$ has no common zeroes in $X$ except the points of $\Gamma$.}
\item{ For every point $p\in \Gamma$, the common kernel of the differentials at $p$ of the sections of $V$ is trivial.}
\end{enumerate}

\end{lemma}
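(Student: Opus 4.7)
The plan is to translate both sides of the equivalence into linear-algebraic statements inside the ambient vector space $\R^{m+1}$ and then match them term by term.

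First I would set up notation. Since $X$ is non-degenerate, identify $R_1$ with $(\R^{m+1})^*$. Fix an affine representative $\tilde{p}\in \R^{m+1}$ of each $p\in \Gamma$, let $L:=\mathrm{span}(\tilde{p}\,:\,p\in \Gamma)\subseteq \R^{m+1}$, and for each $p\in \Gamma$ let $T_p\subseteq \R^{m+1}$ denote the affine cone over the projective tangent space $T_pX$; this is a $(d+1)$-dimensional subspace containing the line $\R\tilde{p}$. By the very definition of $V$, the subspace $L$ is exactly the annihilator $V^\perp$, so $\langle \Gamma\rangle=\PP(L)$ is cut out in $\PP^m$ by the linear series $V$.

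Next I would handle the equivalence (1)$\Leftrightarrow$(1'): since $\langle \Gamma\rangle$ equals the zero locus of $V$ in $\PP^m$, the set-theoretic intersection $\langle \Gamma\rangle\cap X$ coincides with the common zero locus of the sections of $V$ on $X$. By construction, $\Gamma$ is already contained in that locus, so (1) asserts that there are no further zeros, which is precisely (1').

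For (2)$\Leftrightarrow$(2') I would apply linear duality. For each $p\in \Gamma$ and $f\in V$ the relation $f(\tilde{p})=0$ forces $f|_{T_p}$ to vanish on $\R\tilde{p}$, so $f$ descends to a linear form $df_p$ on the intrinsic tangent space $T_pX\cong T_p/\R\tilde{p}$. Taking common kernels,
\[
\bigcap_{f\in V}\ker(df_p) \;=\; (T_p\cap V^\perp)/\R\tilde{p} \;=\; (T_p\cap L)/\R\tilde{p},
\]
which is trivial if and only if $T_p\cap L=\R\tilde{p}$. Projectivizing, this is exactly the statement that $T_pX\cap \langle \Gamma\rangle$ reduces to the single point $p$, i.e.\ condition (2).

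The only subtle point---and the one I would be most careful to pin down---is the convention for ``trivial'' in (2): both $T_pX$ and $\langle \Gamma\rangle$ are projective subspaces of $\PP^m$ sharing the point $p$, so triviality of their intersection has to be read as the intersection reducing to $\{p\}$, equivalently, as the intersection of their affine cones being the radial line $\R\tilde{p}$. Once this is matched with the fact that (2') is formulated in the intrinsic tangent space $T_p/\R\tilde{p}$, the displayed identity above gives both directions of the equivalence simultaneously.
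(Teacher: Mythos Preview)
Your argument is correct and follows essentially the same route as the paper: both proofs dispose of condition~(1) by noting that $\langle\Gamma\rangle$ is the zero locus of $V$ in $\PP^m$, and both establish condition~(2) via the identity $\bigcap_{f\in V}\ker(df_p)=T_pX\cap\langle\Gamma\rangle$. The only stylistic difference is that the paper phrases the tangent-space computation in the language of the Zariski cotangent space $m_p/m_p^2$, whereas you work with explicit affine cones in $\R^{m+1}$ and carefully track the quotient by $\R\tilde{p}$; your added remark about the convention for ``trivial'' is a welcome clarification that the paper leaves implicit.
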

\begin{proof} The equivalence of the first condition in both definitions is immediate. For the second recall that if $p$ is a point of $X$, then the Zariski cotangent space of $X$ at $p$ is $\operatorname{Cot}_p X:=m_p/m_p^2$, where $m_p$ is the maximal ideal of the local ring $\Osh_{X,p}$, and the tangent space of $X$ at $p$ is ${\rm Spec}(\Sym^{\bullet}(m_p/m_p^2))$. A linear form $s$ is locally specified by a function $f$ and the differential of $f$ at $p$ is $df_{p}:=(f-s(p))+m_p^2=f+m_p^2\in \operatorname{Cot}_pX$. The kernel of this differential is a vector subspace ${\rm Ker}(df_p)\subseteq T_pX$, which is well-defined independently of the representative $f$ chosen for $s$. It is immediate that $T_pX\cap\langle \Gamma\rangle=\bigcap_{f\in V_1}{\rm Ker}(df_p)$ and the equivalence of the second condition follows.     
\end{proof}

Recall that the degree of a non-degenerate projective variety is at least $\codim(X)+1$ and that a variety is called of minimal degree if this lower bound is achieved (see~\cite{EisenbudHarris} for a classification of varieties of minimal degree).

\begin{lemma}\label{MaxGI} Assume that $X\subseteq \PP^m$ is non-degenerate. The cardinality of a generic set of points, which is also  independent, is at most:
\begin{enumerate}
\item{${\rm codim}(X)+1$ if $X$ is of minimal degree, and}
\item{${\rm codim}(X)$ if $X$ is not of minimal degree.}
\end{enumerate}
Moreover, in both cases, there exist generic independent sets $\Gamma\subseteq X(\RR)$ of this maximum cardinality. In particular, a generic set of points of cardinality at most $\codim(X)$ is independent.
\end{lemma}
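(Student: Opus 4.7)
The plan is to split by the value of $s = |\Gamma|$ into three regimes --- $s \leq c$, $s = c+1$, and $s \geq c+2$ --- and to combine dimension counts (which settle the first and third regimes) with a degree count (which settles the middle regime). I would use the dual formulation of independence from Lemma~\ref{DualIndependence} whenever condition (2) is involved, as it converts independence into the statement that the differentials at $p \in \Gamma$ of the sections of the vanishing linear series $V \subseteq R_1$ span $\operatorname{Cot}_p X$. Since $X$ is totally real, any Zariski-dense open subset of $X^s$ meets $X(\RR)^s$, so it suffices to verify openness of the relevant conditions over $\CC$.

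For $s \leq c$ I would induct on $s$, with trivial base case $s = 1$. Assume $\Gamma' \subseteq X(\RR)$ is a generic independent set of size $s-1$. The key dimension counts are that the expected projective dimensions of $\langle \Gamma \rangle \cap X$ and of $T_{p_i} X \cap \langle \Gamma \rangle$ both equal $s - 1 - c \leq -1$. Since each of these intersections is forced to contain $\Gamma$ (respectively $\{p_i\}$), they must generically equal these minimal sets, which is exactly what conditions (1) and (2) require. Consequently, the loci of bad choices of $p_s$ in $X$ are proper subvarieties, and a generic $p_s \in X(\RR)$ avoids all of them.

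For $s = c+1$ the dimensional arguments give expected dimension zero and are inconclusive, so I would replace them with a degree argument. The span map $\Psi \colon X^{c+1} \dashrightarrow G(c, m)$, which sends an ordered tuple to the projective $c$-plane it spans, has source and target of equal dimension $(c+1)d$, and non-degeneracy of $X$ forces $\Psi$ to be dominant. Hence the span $\langle \Gamma \rangle$ of a generic tuple $\Gamma$ behaves like a generic $c$-plane in $G(c,m)$, which meets $X$ transversely in exactly $\deg X$ points. When $X$ has minimal degree, $\deg X = c+1 = |\Gamma|$, forcing $\langle \Gamma \rangle \cap X = \Gamma$; condition (2) is verified by a parallel count using the expected dimension zero of $T_{p_i} X \cap \langle \Gamma \rangle$ and the fact that this intersection always contains $p_i$. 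When $\deg X \geq c+2$, the set $\langle \Gamma \rangle \cap X$ strictly contains $\Gamma$ for every generic $\Gamma$, and condition (1) fails.

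Finally, for $s \geq c+2$ the generic span $\langle \Gamma \rangle$ has projective dimension $s - 1 \geq c+1$, so $\langle \Gamma \rangle \cap X$ is positive-dimensional and in particular strictly larger than the finite set $\Gamma$, violating condition (1). The principal technical obstacle is the sharp case $s = c+1$ with $X$ not of minimal degree; it is invisible to any dimension count and depends essentially on the classical lower bound $\deg X \geq c + 1$ together with the characterization of equality by varieties of minimal degree. This is precisely the structural reason for the dichotomy appearing in the statement.
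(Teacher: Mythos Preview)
Your overall decomposition into the three regimes $s\leq c$, $s=c+1$, $s\geq c+2$ is sound, and the upper-bound half of your argument (the third regime, and the degree obstruction in the second) matches the paper's. The existence half, however, is handled quite differently, and your version has a genuine gap.

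For $s\leq c$ you write that the ``expected projective dimension'' of $\langle\Gamma\rangle\cap X$ is $s-1-c\leq -1$, and conclude that the intersection must generically equal the forced subset $\Gamma$. This inference is not valid: the plane $\langle\Gamma\rangle$ is \emph{not} a generic $(s-1)$-plane in $\PP^m$ but an $s$-secant one, so the naive codimension count does not apply. What you actually need is a higher trisecant statement --- that a generic $s$-secant $(s{-}1)$-plane to $X$ is not $(s{+}1)$-secant --- and this is real content, not a formality. (For instance, when $s=2$ this is exactly the trisecant lemma.) The same issue recurs at $s=c+1$: asserting that $\Psi\colon X^{c+1}\dashrightarrow G(c,m)$ is dominant because source and target have equal dimension and $X$ is non-degenerate is not a proof; dominance here is equivalent to knowing that $\langle\Gamma\rangle\cap X$ is finite for generic $\Gamma$, which is again the missing ingredient.

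The paper sidesteps this entirely. Rather than arguing directly on $X^s$, it slices $X$ by successive generic real hyperplanes (using Bertini together with the fact that a variety with a smooth real point is totally real) down to a non-degenerate totally real curve $C\subseteq\PP^{c+1}$, takes a hyperplane through $c+1$ general real points of $C$, and then invokes Harris's uniform position lemma to conclude that the resulting $\deg(C)$ points are in linearly general position. Any $c$ of these real points then form an independent set. So the paper trades your incidence-variety heuristics for two classical tools (Bertini and uniform position), and the latter is exactly what certifies the trisecant-type statement you left unproved. Your reduction of the real issue to a complex-genericity question via density of $X(\RR)^s$ is cleaner than the paper's step-by-step tracking of real points, but it only helps once the complex statement is actually established.
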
 
\begin{proof} If $\Gamma$ is a generic set of $k$ points, then the linear series $V\subseteq R_1$ consisting of forms vanishing on $\Gamma$ has dimension $m+1-k$. Since $\Gamma$ is independent, these linear forms must have finitely many zeroes on $X$ and, in particular, $\dim(X)-(m+1-k)\leq 0$ and thus $k\leq \codim(X)+1$. If $k=\codim(X)+1$, then $V$ must have $\deg(X)$ many zeroes on $X$, which, by independence of $\Gamma$, can happen only if $\deg(X)=k$ (i.e., if $X$ is a variety of minimal degree). Since $X$ is totally real, this bound is achieved by a generic set of $\codim(X)+1$ real points. 
As a result, if $X$ is not a variety of minimal degree, then we know that $k\leq {\rm codim}(X)$ and we will show that this upper bound can be achieved by a set $\Gamma\subseteq X(\RR)$. We begin by showing that there exist linear forms $L_1,\dots, L_{d}$ with $d=\dim(X)$ such that $X\cap \mathbb{V}(L_1,\dots, L_d)$ is a set of reduced points in linearly general position of which at least $\codim(X)+1$ are real. To see this recall the following facts:
\begin{enumerate}
\item{The intersection of a non-degenerate, positive dimensional, variety with a general hyperplane $H$ is non-degenerate in $H$ (see~\cite[Proposition 18.10]{Harris}).}
\item{By Bertini's Theorem~\cite[Theorem 6.3]{J}, the general hyperplane section of a geometrically integral variety of dimension at least two is geometrically integral and the general hyperplane section of a (reduced) variety is reduced.}
\item{A real variety is totally real iff it contains a smooth real point (see~\cite[Section 1]{B}).}
\item{The locus of hyperplanes, which intersect the nonsingular locus of $X$ transversely, contains the Zariski open set given by the complement of the projective dual of $X$ (see ~\cite[Chapter 1]{GKZ}).}
\end{enumerate}
As a result, since $X$ is totally real, there exists a form $L_1\in R_1$ defining a hyperplane $H=\mathbb{V}(L_1)$, which intersects the nonsingular locus of $X$ transversely, passes through some real nonsingular point $p$, and has the property that $H\cap X$ is non-degenerate and geometrically integral, if $X$ has dimension at least two. Since $H$ is transverse, it follows that $X\cap H$ is again totally real. Iterating this process we can construct linear forms $L_1,\dots, L_{d-1}$ such that $X\cap \mathbb{V}(L_1,\dots, L_{d-1})$ is a totally real, non-degenerate curve $C\subseteq \PP^{m-d+1}$. The hyperplane $H'$ of $\PP^{m-d+1}$ spanned by a general set of $m-d+1$ real points of $C$ contains $\deg(C)$ reduced points of which at least the chosen $m-d+1=\codim(X)+1$ points are real. By the genericity of our choices this hyperplane is also transverse to the curve at every point of intersection. Moreover, by the uniform position Lemma of Harris~\cite{H}, these $\deg(C)$ points are in linearly general position in $\PP^{m-d}$. Let $\Gamma$ be any subset of size $\codim(X)$ of these real points. Since the points are in linearly general position, the subspace $\langle\Gamma\rangle$ of $H'$ spanned by the points of $\Gamma$ in $H'$ contains no other point of $X$, and the tangent space of $C$ is transverse to $H'$ at every point of $C$. It follows that $\Gamma$ is an independent set of points of $X$ as claimed.  
\end{proof}

\begin{remark} The maximum cardinality of an independent set of points and the maximum cardinality of a {\it generic} independent set of points may differ. For an example, let $X\subseteq \mathbb P^3$ be a nonsingular cubic surface. A set of three distinct collinear points of $X$, which span a line in $\PP^3$ not contained in the surface, gives an example of a maximum independent set of points (any set of larger cardinality in $X$ would have to span at least a plane, which must intersect the surface along at least a curve). On the other hand, the maximal, generic, independent set of points of $X$ has cardinality one, since the line spanned by a generic set of two points intersects the surface at a third point not in the set.
\end{remark}

Our next theorem explains why the concept of independence is useful for determining the dimension of $P(\Gamma)$. It generalizes~\cite[Proposition 1.3]{BIK} from the context of Veronese embeddings of projective space to arbitrary projective varieties. Recall that if $p\in X$ is a (closed) point, then a function $f\in \mathcal{O}_{X,p}$ (resp. a section $s$) is said to vanish to order at least two if $f$ (resp. some representative $f$ for $s$ near $p$) is an element of $m_p^2$, where $m_p$ is the maximal ideal of $\Osh_{X,p}$.

\begin{thm} \label{DimP1}Let $\Gamma\subseteq X(\RR)$ be a finite set of nonsingular points. If $\Gamma$ is independent, then the cone $P(\Gamma)$ has dimension ${\rm dim}(B)$, where $B\subseteq R_2$ is the vector space of quadratic forms vanishing to order at least two at all points of $\Gamma$.
\end{thm}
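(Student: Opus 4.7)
The plan is to show that $P(\Gamma)\subseteq B$ and then exhibit a form $q\in B$ that lies in the relative interior of $P(\Gamma)$ inside $B$, which forces $\dim P(\Gamma)=\dim B$.

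First I would verify the inclusion $P(\Gamma)\subseteq B$. If $q\in P$ vanishes at a nonsingular point $p\in X(\RR)$, then $p$ is a local minimum of $q$ on $X(\RR)$ and hence also a critical point in local coordinates on the smooth locus, so $q$ lies in $m_p^2\subseteq \mathcal{O}_{X,p}$. Applying this at each point of $\Gamma$ gives $P(\Gamma)\subseteq B$.

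Next I would exhibit an explicit witness to full-dimensionality. Let $V\subseteq R_1$ be the linear series of linear forms vanishing on $\Gamma$, choose a basis $\ell_1,\dots,\ell_k$, and set
\[q:=\ell_1^2+\cdots+\ell_k^2\in \Sigma(\Gamma)\subseteq P(\Gamma).\]
Two properties of $q$, both consequences of the independence of $\Gamma$ via Lemma~\ref{DualIndependence}, are key. First, $q(x)=0$ on $X(\RR)$ iff all $\ell_i$ vanish at $x$, and condition (1) of Lemma~\ref{DualIndependence} says the common zero locus of $V$ on $X$ equals $\Gamma$; thus $q$ is strictly positive on $X(\RR)\setminus \Gamma$. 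Second, at each $p\in \Gamma$ the Hessian of $q$ restricted to $T_pX$ is $2\sum_i (d\ell_i)_p(d\ell_i)_p^T$, whose kernel on $T_pX$ is $\bigcap_i \ker((d\ell_i)_p)$; condition (2) of Lemma~\ref{DualIndependence} forces this common kernel to be trivial, so the Hessian of $q$ at $p$ is positive definite on $T_pX$.

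Now I would show that $q$ lies in the relative interior of $P(\Gamma)$ in $B$, i.e., for every $b\in B$ there exists $\eps>0$ with $q+tb\in P(\Gamma)$ for $|t|<\eps$. Clearly $q+tb\in B$. To verify $q+tb\geq 0$ on $X(\RR)$, I would split the argument into a global and a local part. Globally, work on the compact space $X(\RR)\subseteq \PP^m(\RR)$: the ratio $q/\|\cdot\|^2$ descends to a continuous positive function on $X(\RR)\setminus \Gamma$, so on the complement of any open neighborhood $N$ of $\Gamma$ it is bounded below by some $\delta>0$, and since $b/\|\cdot\|^2$ is bounded, $q+tb>0$ outside $N$ for small $|t|$. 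Locally, near each $p\in \Gamma$, choose analytic coordinates on $X$ vanishing at $p$; since $q$ and $b$ both vanish to order two, the positive definiteness of the Hessian of $q$ at $p$ gives $q\geq c\|\cdot\|^2$ while $|b|\leq C\|\cdot\|^2$ on a small neighborhood, so $q+tb\geq 0$ for $|t|<c/C$. Choosing $N$ small enough that the local estimate applies on $N$ and then taking $|t|$ small enough for both estimates completes the proof that $q+tb\in P(\Gamma)$.

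The conclusion $\dim P(\Gamma)=\dim B$ follows immediately from the existence of this relative-interior point. The main obstacle is the quantitative comparison near each $p\in\Gamma$, which requires both the positive-definiteness of the Hessian of $q$ at $p$ on $T_pX$ (where independence is essential) and the observation that $B$ is precisely the space where such a Hessian comparison is sensible, because every $b\in B$ also vanishes to order two at $p$.
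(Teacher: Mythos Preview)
Your proposal is correct and follows essentially the same approach as the paper: both prove $P(\Gamma)\subseteq B$ via the critical-point observation, construct the same witness $q=\sum \ell_i^2$ from a basis of $V$, invoke independence (via Lemma~\ref{DualIndependence}) to get strict positivity off $\Gamma$ and positive-definite Hessian at each $p\in\Gamma$, and conclude by perturbing $q$ within $B$. The only difference is that you spell out the compactness/local-Hessian estimate that the paper leaves implicit in the phrase ``for any sufficiently small real number $\epsilon>0$ the section $r+\epsilon t$ is nonnegative.''
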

\begin{proof} Let $p$ be a nonsingular point of $X$. Every zero of a nonnegative quadratic form at $p$ is a critical point of the form and thus $P(\Gamma)\subseteq B$. We will show that the independence of $\Gamma$ implies that $P(\Gamma)$ is full-dimensional in $B$. Let $V$ be the set of linear forms vanishing at all points of $\Gamma$. Let $s_1,\dots,s_k$ be a basis for $V$ and define $r:=\sum_{i=1}^k s_i^2$. Since $\Gamma$ is independent, it follows that the only real zeroes of $r$ are the points of $\Gamma$ and that the Hessian of the form $r$ is strictly positive definite at every $p\in \Gamma$. As a result, if $t\in B$, then for any sufficiently small real number $\epsilon>0$ the section $r+\epsilon t$ is nonnegative. It follows that the cone $P(\Gamma)$ is full-dimensional in $B$ as claimed.\end{proof}

What is the dimension of the vector space $B$? If the points of $\Gamma$ are generic, then Terracini's Lemma shows that this dimension is determined by the codimension of a certain secant variety. The fact that these secant varieties are never deficient allows us to explicitly compute the dimension of $B$ and prove Theorem~\ref{thm:DimP}. 
\begin{proof}[Proof of Theorem~\ref{thm:DimP}] By Theorem~\ref{DimP1}, the independence of $\Gamma$ implies that the dimension of $P(\Gamma)$ is equal to that of the linear system of quadrics vanishing at the points of $\Gamma$ with multiplicity at least two. To compute the dimension of this linear system, let $\nu$ be the second Veronese morphism $\nu :X\rightarrow \PP(R_2^*)$. A quadric in $R_2$ vanishes at a point $p$ of $\Gamma$ with multiplicity at least two iff the corresponding linear form in $\PP(R_2^*)$ contains the tangent space $T_{\nu(p)}Z$, where  $Z\subseteq \PP(R_2^*)$ denotes the image of $X$ under the second Veronese map. Now, by Terracini's Lemma~\cite{Terracini}, the projective subspace of $\PP(R_2^*)$ spanned by the tangent spaces $T_{\nu(p_i)}Z$ coincides with the tangent space of the $(s-1)$\textsuperscript{st} secant variety of $Z$ at a general point in the projective subspace spanned by the points $\nu(p_i)$, where $\Gamma=\{p_1,\ldots,p_s\}$. It follows that the dimension of the space of quadrics in $R_2$ vanishing at the points of $\Gamma$ with multiplicity at least two equals the dimension of the space of linear forms in $\PP(R_2^*)$, which contain the tangent space of ${\rm Sec}^{\codim(X)-1}(Z)$ at a general point. Since the points $p_i$ are generic, this dimension equals $\codim({\rm Sec}^{s-1}(Z))$ as claimed.
On the other hand, the $(s-1)$\textsuperscript{st} secant variety of $Z$ is the image of the rational map $Z^{s}\times \PP^{s-1}$, which maps $(z_1,\dots, z_s,[\alpha_1:\dots: \alpha_{s}])$ to $\alpha_1z_1+\dots + \alpha_s z_s$ and thus the dimension of the secant variety is at most $s(\dim(Z)+1)-1$. The $(s-1)$\textsuperscript{st} secant variety of $Z$ is called non-defective if this obvious upper bound is attained. By \cite[Theorem 3.3]{Zak}, if $s\leq \codim(X)+1$ then ${\rm Sec}^{s-1}(Z)$ is non-defective. By Lemma \ref{MaxGI}, we know that $s\leq \codim(X)+1$, proving the claim.
\end{proof}

\section{Dimensional differences and gap vectors}
\label{DGepsilon}
In this section we prove our main result, Theorem~\ref{DimGap}, which computes the dimensional gap between $\Sigma(\Gamma)$ and $P(\Gamma)$ for generic sets of points, in terms of the quadratic deficiency of $X$ and that of its generic linear projections. We introduce the gap vector of a real projective varietiy and study its properties.

\begin{definition} Let $X\subseteq \PP^m$ be a non-degenerate variety of dimension $d$ and codimension $c$. Let $I$ be the ideal of definition of $X$, and let $R$ be its homogeneous coordinate ring. Recall~(\cite{BSV}, \cite{Zak}) that the quadratic deficiency of $X$ is the number
\[ \epsilon(X):= \binom{c+1}{2}-\dim(I_2)=\dim(R_2)-(m+1)(d+1)+\binom{d+1}{2}.\]
\end{definition}
We are now in a position to prove Theorem \ref{DimGap}:

\begin{proof}[Proof of Theorem \ref{DimGap}] 

Using the definition of quadratic deficiency of $X$ and $Y$ we obtain the equalities
\[ \dim(P(\Gamma))= \epsilon(X) +(m+1-s)(d+1)-\binom{d+1}{2} \]
\[ \dim(\Sigma(\Gamma))= \epsilon(Y) +(m-s+1)(d+1)-\binom{d+1}{2}.\]
Subtracting we conclude that the equality $\dim(P(\Gamma))-\dim(\Sigma(\Gamma))= \epsilon(X)-\epsilon(Y)$ holds as claimed.
\end{proof}

\subsection{Combinatorics and geometry of gap vectors}\label{GapVectors}
As described in the introduction we keep track of the dimensional gaps between generic faces of $P$ and $\Sigma$ by putting them into a vector.
\begin{definition} The gap vector of $X$ is the vector $g(X)\in \mathbb{Z}^{\codim(X)}$ defined for $1\leq j\leq \codim(X)$, by $g(X)_j=\dim (P(\Gamma))-\dim(\Sigma(\Gamma))$, where $\Gamma\subseteq X(\RR)$ is a generic independent set with $|\Gamma|=j.$\end{definition}

In this section we prove several fundamental properties of gap vectors.  Our results are a consequence of Theorem~~\ref{DimGap} and use the following elementary geometric construction,

\begin{definition}Let $W\subseteq \PP^m$ be a variety. A point $p\in \PP^m$ is a cone point for $W$ if $p\in W$ and for any $q\in W$, the line $\langle p,q\rangle$ is contained in $W$.
\end{definition}
\begin{lemma}\label{ConePoints} The following statements hold,
\begin{enumerate}
\item{The set of cone points of $W$ forms a projective subspace of $\PP^m$ contained in $W$.}
\item{Let $p\in \PP^m$ and let $W'$ be the image of $W$ under the projection $\pi_p$ away from $p$. The point $p$ is a cone point of $W$ iff the equality $\pi_p^*\left(I(W')\right)=I(W)$ holds. }
\end{enumerate}
\end{lemma}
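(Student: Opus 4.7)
The plan is to treat both parts as concrete calculations in homogeneous coordinates once we have identified the right invariance property of a cone point, namely that translating in the direction of $p$ preserves $W$.

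For part (1), I would show directly that the set of cone points is closed under taking lines: given two cone points $p_1, p_2$, any point $p\in \langle p_1,p_2\rangle$ is a cone point. First, $p$ lies in $W$ since $\langle p_1,p_2\rangle\subseteq W$ (because $p_2\in W$ and $p_1$ is a cone point). Next, for an arbitrary $q\in W$, one shows that the entire plane $\Pi:=\langle p_1,p_2,q\rangle$ is contained in $W$: by the cone property at $p_2$, the line $\langle p_2,q\rangle$ is in $W$; applying the cone property at $p_1$ to each point of this line sweeps out every point of $\Pi$, placing $\Pi$ inside $W$. Since $\langle p,q\rangle\subseteq \Pi$, this shows $p$ is a cone point and finishes part (1).

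For part (2), I would choose coordinates so that $p=[0:\cdots:0:1]$, so that $\pi_p$ is the map $(x_0:\dots:x_m)\dashrightarrow (x_0:\dots:x_{m-1})$ and the pullback $\pi_p^*$ identifies $\RR[y_0,\dots,y_{m-1}]$ with the subring $\RR[x_0,\dots,x_{m-1}]\subseteq \RR[x_0,\dots,x_m]$. The inclusion $\pi_p^*(I(W'))\subseteq I(W)$ is automatic: any $g\in I(W')$ satisfies $\pi_p^*(g)(q)=g(\pi_p(q))=0$ for every $q\in W\setminus\{p\}$, and vanishing on this dense subset forces vanishing on all of $W$. For the direction ``$\Leftarrow$'', assume $I(W)$ is generated by polynomials not involving $x_m$. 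These homogeneous generators automatically vanish at $p$, so $p\in W$, and if such a polynomial vanishes at $(q_0,\dots,q_m)$ then it vanishes at $(q_0,\dots,q_{m-1},t)$ for every $t$, so the line $\langle p,q\rangle$ lies in $W$, making $p$ a cone point.

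The main content is the direction ``$\Rightarrow$''. Take any $f\in I(W)$ and write $f=\sum_i f_i(x_0,\dots,x_{m-1})\,x_m^i$. For an arbitrary $q=(q_0,\dots,q_m)\in W$, the cone property guarantees that the entire line $\langle p,q\rangle$ lies in $W$, so the univariate polynomial
\[
\mu\longmapsto f(q_0,\dots,q_{m-1},\,q_m+\mu)=\sum_i f_i(q_0,\dots,q_{m-1})(q_m+\mu)^i
\]
is identically zero. Reading off the top-degree coefficient in $\mu$ and descending, one obtains $f_i(q_0,\dots,q_{m-1})=0$ for every $i$. Since $q\in W$ was arbitrary, each $f_i$ lies in $I(W)\cap \RR[x_0,\dots,x_{m-1}]=\pi_p^*(I(W'))$. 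Multiplying by $x_m^i$ and summing places $f$ inside the ideal $\pi_p^*(I(W'))\cdot \RR[x_0,\dots,x_m]$, which gives the required equality.

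\textbf{Main obstacle.} The only point requiring a little care is the proper interpretation of $\pi_p^*(I(W'))$ as an ideal inside $\RR[x_0,\dots,x_m]$ (not only inside the subring of polynomials independent of $x_m$) and the handling of $\pi_p$ as a rational map undefined at $p$; both are routine once coordinates are fixed so that $p=[0:\cdots:0:1]$. Everything else reduces to the identically-zero polynomial argument above.
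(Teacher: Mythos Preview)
The paper states this lemma without proof, treating both assertions as standard facts about cones over projective varieties. Your argument is correct and supplies exactly the kind of elementary verification the authors are implicitly invoking.

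A few small remarks. In part~(1), your line-closure argument is fine; you may want to add one sentence noting that a subset of $\PP^m$ containing the line through any two of its points is automatically a projective linear subspace (pass to the affine cone and observe it is closed under addition and scaling), and that the empty set is allowed. In part~(2), the identification $I(W)\cap\RR[x_0,\dots,x_{m-1}]=\pi_p^*(I(W'))$ that you use at the end deserves one line of justification: a form $g$ independent of $x_m$ vanishes on $W$ iff it vanishes on the dense subset $W\setminus\{p\}$, iff it vanishes on $\pi_p(W\setminus\{p\})$, iff it lies in $I(W')$. With that, the decomposition $f=\sum_i f_i\,x_m^i$ and the identically-zero univariate polynomial argument go through exactly as you wrote. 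Your handling of $\pi_p^*(I(W'))$ as the extended ideal in $\RR[x_0,\dots,x_m]$ is the correct reading of the statement.
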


We are now in a position to prove the main theorem of this section,
\PropGap*
\begin{proof} $(1)$ Since $\Sigma(\Gamma)\subseteq P(\Gamma)$, the numbers $g_j(X)$ are nonnegative for $1\leq j\leq c$. This also implies that if $Y$ is obtained from $X$ by projecting away from a generic set of $s$ points of $X$, then $\epsilon(X)\geq \epsilon(Y)$ because $\epsilon(X)-\epsilon(Y)=g_s(X)$. It follows that $g(X)$ is non-decreasing, because $g_{j+1}(X)-g_j(X)=\epsilon(Y_{j})-\epsilon(Y_{j+1})$, where $Y_{j+1}$ is obtained from $Y_{j}$ by projecting away from a general point in $Y_{j}.$

$(2)$ If $|\Gamma|=c$, then the projection away from $\Gamma$, $\pi_{\Gamma}: X\dashrightarrow \PP^{\dim(X)}$ has image $Y=\PP^{\dim(X)}$ and thus $\epsilon(Y)=0$, proving the claim. 

$(3)$ The image of $X$ under the projection away from $c-1$ general real points is a hypersurface $Y$ of degree $\deg(X)-c+1$ in $\PP^{d+1}$ because both degree and codimension decrease by one when we project away from a general point on a non-degenerate variety of codimension at least two. If $X$ is not of minimal degree, then the degree of this hypersurface is at least three and a direct calculation shows that $\epsilon(Y)=1$, proving the claim.

$(4)$ From the definition of quadratic deficiency we obtain that for all $j$ satisfying $1\leq j\leq c-1$ the following equalities hold
\[ g_{j+1}(X)-g_j(X)=\epsilon(Y_{j})-\epsilon(Y_{j+1})= \codim(Y_j) + \dim\left(I(Y_{j+1})_2\right)-\dim\left(I(Y_j)_2\right).\]
The right hand side is at most $\codim(Y_j)=c-j$; if $\pi_p$ denotes the projection away from the $(j+1)$\textsuperscript{st} general point, then $\pi_p^*$ injects $I(Y_{j+1})_2$ into $I(Y_j)_2$. Moreover, the inequality is an equality precisely when the dimensions of $I(Y_j)_2$ and $I(Y_{j+1})_2$ coincide.  This happens when $I(Y_{j})$ does not contain any quadric and we claim that this is the only possibility. If the variety $Z$ defined by the quadrics in $I(Y_j)$ is not $\PP^m$, then by Lemma~\ref{ConePoints}  the projective subspace of cone points of $Z$ is a proper subspace of $\PP^m$, which cannot contain the non-degenerate variety $Y_j$. It follows that a generic point of $Y_j$ is not a cone point of $Z$ and thus that $\pi_p^*\left(I(Y_{j+1})_2\right)\neq I(Y_j)_2$ implying that $g_{j+1}(X)-g_j(X)<c-j$. This concludes the proof of the theorem.
\end{proof}

The basic properties above yield the following classification theorem for the varieties with simplest gap vectors,

\PropGapVect*
\begin{proof} 
$(1)$: By~\cite[Corollary 5.8]{Zak}, we have $\epsilon(X)=0$ iff $X$ is a variety of minimal degree. Thus $(1)$ follows from Theorem~\ref{gapVector} part $(2)$.

$(2)$: By part $(1)$, if $g(X)$ has some nonzero component, then $X$ is not of minimal degree and in particular the last component of $g(X)$ has to be nonzero. If this last component is greater than one, then $g(X)_{\codim(X)-1}$ is also nonzero. As a result if $g(X)$ has only one nonzero component, then $\epsilon(X)=1$. By Theorem~\ref{gapVector} part $(3)$, $g_{\codim(X)}(X)=1$ implies that all other components are zero as claimed. \end{proof}

\begin{remark} By~\cite[Corollary 5.8]{Zak} we have $\epsilon(X)=0$ iff $X$ is a variety of minimal degree. As a result, the previous corollary implies that if $X$ is not a variety of minimal degree and $\Gamma$ is a maximal generic and independent set of points, then $\Sigma(\Gamma)\subseteq P(\Gamma)$ is a strict inclusion between faces of different dimension (and in particular $\Sigma\neq P$). This gives a new proof of the theorem of Blekherman, Smith and Velasco~\cite{BSV}, which states that $\Sigma\neq P$ for varieties $X$, which are not of minimal degree.
\end{remark} 
\begin{remark}By ~\cite[Proposition 5.10]{Zak} we know that $\epsilon(X)=1$ iff either:
\begin{enumerate}
\item{$X$ is a hypersurface of degree at least three.}
\item{ $X$ is a linearly normal variety of almost-minimal degree (i.e., $X$ satisfies $\deg(X)=\codim(X)+2$).}
\end{enumerate}
Moreover~\cite[Theorem 1.2]{BSc} shows that a non-degenerate variety $X$ of almost minimal degree is linearly normal iff it is arithmetically Gorenstein. These varieties are also known as maximally Del Pezzo varieties and have been classified by Fujita~\cite{Fujita}. The simplest examples are Del Pezzo surfaces~\cite{Pezzo} of degree at least three embedded in projective space by their anticanonical linear system. These are the nonsingular surfaces of degree $m$ in $\PP^m$ for $3\leq m \leq 9$. 
\end{remark}
\begin{remark}
Observe that equality of dimensions does not imply equality of faces. The first component of the gap vector is always equal to $0$. However, if $X$ is not a variety of minimal degree, $\Sigma$ is different from $P$ and it can never happen that $\Sigma(p)=P(p)$ for all points $p\in X$. On the other hand, if the dimensions of all faces $\Sigma(\Gamma)$ and $P(\Gamma)$ coincide, then $X$ is a variety of minimal degree and $\Sigma=P$.
\end{remark}

Part $(2)$ of Corollary \ref{cor:GapProp} is a quantitative counterpart of the idea that sums of squares of linear forms and nonnegative quadratic forms are closest, but unequal, on hypersurfaces of degree at least three and on linearly normal varieties of almost minimal degree.\\
Finally, we compute the gap vector of the Veronese embeddings of $\PP^2$. More intrinsically, the following theorem determines the dimensional differences between nonnegative polynomials of degree $2d$ and sums of squares of degree $d$ in three variables vanishing at generic sets of points of size $j$ with $1\leq j\leq \binom{d+2}{2}-3$. 

\begin{thm}\label{ThmPP2} Let $d$ be a nonnegative integer and let $X:=\nu_d(\PP^2)\subseteq \PP^{\binom{d+2}{2}-1}$ be the $d$\textsuperscript{th} Veronese embedding of $\PP^2$. For $1\leq j\leq c:=\binom{d+2}{2}-3$ the components of the gap vector of $X$ are given by the formula 
\[
g_j(X)=\begin{cases}
0,&\text{ if $j\leq \binom{d+1}{2}$,}\\
(j-\binom{d+1}{2})(d-1)-\binom{j+1-\binom{d+1}{2}}{2},&\text{ otherwise.}
\end{cases}
\] 
\end{thm}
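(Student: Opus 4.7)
My plan is to apply Theorem~\ref{DimGap}, reducing $g_j(X)$ to $\epsilon(X)-\epsilon(Y_j)$. For $X=\nu_d(\PP^2)$, the homogeneous coordinate ring $R$ satisfies $R_2\cong\RR[x_0,x_1,x_2]_{2d}$ via the Veronese identification, so $\dim R_2=\binom{2d+2}{2}$. Substituting $\dim R_2=\binom{2d+2}{2}$, $\dim X=2$, and $m+1=\binom{d+2}{2}$ into the definition of $\epsilon$ yields
\[\epsilon(X)=\binom{2d+2}{2}-3\binom{d+2}{2}+3=\frac{(d-1)(d-2)}{2}.\]
The remainder of the proof computes $\epsilon(Y_j)$, where $Y_j$ is the image of $X$ under projection away from a generic $\Gamma\subseteq X(\RR)$ of size $j$. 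Under the Veronese identification, this projection corresponds to the rational map $\PP^2\dashrightarrow\PP(V^*)$ defined by the linear series $V$ of degree-$d$ forms on $\PP^2$ vanishing on the preimage of $\Gamma$. Since $Y_j$ has dimension $2$ and $\dim V=\binom{d+2}{2}-j$, Theorem~\ref{thm:DimS} reduces the problem to computing $\dim(V\cdot V)$, the dimension of the image of $V\otimes V\to\RR[x_0,x_1,x_2]_{2d}$.

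Writing $k:=j-\binom{d+1}{2}$, the threshold in the theorem reflects the following dichotomy for generic $\Gamma$: for $k\leq 0$ the points of $\Gamma$ lie on a common curve of degree $d-1$, while for $k\geq 1$ no such curve exists. I expect the following rank behavior of the multiplication map:
\begin{enumerate}
\item If $k\leq 0$, the image $V\cdot V$ equals $I(2\Gamma)_{2d}$, the space of degree-$2d$ forms on $\PP^2$ vanishing to order at least $2$ on $\Gamma$. By the Alexander--Hirschowitz theorem---whose exceptional configurations do not occur on $\PP^2$ when $d\geq 3$---one has $\dim I(2\Gamma)_{2d}=\binom{2d+2}{2}-3j$, matching $\dim P(\Gamma)$ from Theorem~\ref{thm:DimP}. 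Hence $g_j=0$.
\item If $k\geq 1$, the symmetric multiplication $\Sym^2 V\to\RR[x_0,x_1,x_2]_{2d}$ is injective, yielding $\dim(V\cdot V)=\binom{v+1}{2}$ with $v:=\dim V=d+1-k$. A direct arithmetic simplification then produces
\[g_j=\binom{2d+2}{2}-3j-\binom{d+2-k}{2}=k(d-1)-\binom{k+1}{2},\]
matching the claimed formula.
\end{enumerate}

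The main obstacle is establishing the precise rank of the multiplication map in both regimes. Claim (2) is approachable via a syzygy argument: since $I(\Gamma)_{d-1}=0$, the minimal generators of $I(\Gamma)$ all lie in degrees $\geq d$, so the only syzygies of elements of $V$ in total degree $2d$ are Koszul relations $f\otimes g-g\otimes f$, which vanish in $\Sym^2 V$. Claim (1) is more delicate, requiring one to realize each form of $I(2\Gamma)_{2d}$ as a sum of products of pairs of elements of $V$. In the strict range $k<0$, this is powered by the nonzero forms $h\in I(\Gamma)_{d-1}$: each such $h$ produces a full $3$-dimensional subspace $h\cdot\RR[x_0,x_1,x_2]_1\subseteq V$, yielding many explicit products. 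The boundary case $k=0$ follows from the dimension coincidence $\dim\Sym^2 V=\dim I(2\Gamma)_{2d}=\binom{d+2}{2}$ once injectivity has been verified. A clean way to complete (1) is a degeneration argument: specialize $\Gamma$ to a configuration supported on a low-degree plane curve where $V\cdot V$ can be compared to $I(2\Gamma)_{2d}$ directly through the minimal free resolution of $I(\Gamma)$, and transfer the resulting equality back to generic $\Gamma$ by upper semi-continuity. Equivalently, claim (1) asserts projective $2$-normality of the blowup $\mathrm{Bl}_\Gamma\PP^2$ embedded by $|dH-\sum E_i|$, which is within reach of Castelnuovo--Mumford regularity techniques.
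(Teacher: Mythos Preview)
Your route is genuinely different from the paper's, and considerably harder. The paper does \emph{not} compute $\dim S_2$ for the projected variety at all. Instead it argues as follows: the vanishing $g_j(X)=0$ for $j\le\binom{d+1}{2}$ is quoted directly from \cite[Corollary~1.8]{BIK}; then one observes that the telescoping sum $\sum_{i=2}^{c}(g_i-g_{i-1})=g_c-g_1=\epsilon(X)=\binom{d-1}{2}$, while Theorem~\ref{gapVector}(4)(a) gives $g_{j+1}-g_j\le c-j$. Summing these upper bounds over the range $j\ge\binom{d+1}{2}$ already yields exactly $\binom{d-1}{2}$, so every inequality is forced to be an equality, and the explicit formula drops out. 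No analysis of $V\cdot V$, no $2$-normality, no syzygies.

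Your plan, by contrast, asks for a direct computation of $\dim(V\cdot V)$ in both regimes, and here the proposal has real gaps. In case~(1) the assertion $V\cdot V=I(2\Gamma)_{2d}$ is precisely the projective $2$-normality of $\operatorname{Bl}_\Gamma\PP^2$ under $|dH-\sum E_i|$; this is the substantive content of the result you are citing from \cite{BIK}, so you would be reproving that input rather than using it. In case~(2) your syzygy sketch is not sufficient: knowing $I(\Gamma)_{d-1}=0$ tells you the minimal generators of $I(\Gamma)$ start in degree $d$, but it does not by itself rule out non-Koszul relations of total degree $2d$. What you actually need is $I(Y_j)_2=0$, i.e., that the image of $V\otimes V\to R_{2d}$ has the expected dimension; the paper obtains this for free from the equality case of Theorem~\ref{gapVector}(4)(b) once maximal growth has been forced, whereas you would have to establish it directly for each $k$ (for small $v=\dim V$ it is easy since $Y_j$ is a hypersurface or all of $\PP^{v-1}$, but for larger $v$ it is a genuine statement about generic projections). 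In short: your approach is correct in outline and would yield an independent proof, but it trades a two-line counting argument for two nontrivial geometric lemmas that you have only sketched.
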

\begin{proof} By \cite[Corollary 1.8]{BIK}, there is no dimensional gap when $j\leq \binom{d+1}{2}$.
We know that
\[\sum_{i=2}^{\codim(X)} (g_i-g_{i-1})=\epsilon(X)-g_1(X)=\epsilon(X) \]
By direct computation $\epsilon(X)=\binom{d-1}{2}$. Moreover, by Theorem~\ref{gapVector} part $(4)$, we know that
\begin{equation}\label{eq:gap}
 g_{\binom{d+1}{2}+s+1}(X)-g_{\binom{d+1}{2}+s}(X)\leq d-2-s.
\end{equation}
As a result the sum of the entries in the first difference of the gap vector cannot exceed $\sum_{s=1}^{c-\binom{d+2}{2}} (d-2-s)=\binom{d-1}{2}$. Since the sum of the entries in the first difference of the gap vector also equals $\epsilon(X)=\binom{d-1}{2}$, the inequality \eqref{eq:gap} is an equality and thus the gap vector of $X$ has maximum growth in all of its components $g_j(X)$ for $j\geq \binom{d+1}{2}$. The claim follows immediately.
\end{proof}

\begin{remark}
Although the previous theorem shows that the gap vectors of Veronese embeddings of $\PP^2$ exhibit only extremal growth, this is not necessarily true for arbitrary varieties. For instance, for the $4$\textsuperscript{th} Veronese embedding of $\PP^3$, using Macaulay2~\cite{M2}, we compute that the first $23$ entries of the gap vector are equal to $0$ and $g_{24},\ldots,g_{31}$ are $3,10,16,21,25,28,30,31$. Whereas, assuming maximal growth, the first nonzero entry should be an $8$.
\end{remark}

For $X = v_d(\mathbb P^n)$, in \cite[Proposition 6.2]{BIK}, bounds were established for the smallest $j$ such that $g_j(X) > 0$. We state the following conjecture, which implies that these bounds are in fact sharp.
 
\begin{conj}
\label{conjecture}
 Let $X = v_d(\mathbb P^n)$ and $\bar j := \min\{j: g_j(X) > 0\}$. Then the following hold.
\begin{enumerate}
 \item $\bar j = \left\lceil\binom{n+d}{d}-(n+1)+\frac{1}{2}-\sqrt{\left(n+\frac{1}{2}\right)^2+2\binom{n+2d}{2d}-2(n+1)\binom{n+d}{d}}\right\rceil,$
\item for $\bar j \leq j \leq \codim X$ it holds that 
$$g_j(X) = \binom{n+2d}{2d} - j(n+1) - \binom{\binom{n+d}{d}-j+1}{2}.$$
\end{enumerate}
\end{conj}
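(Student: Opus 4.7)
The plan is to reformulate Conjecture~\ref{conjecture} as a maximal-rank statement for the multiplication map on the degree-$d$ part of the ideal of generic points in $\PP^n$, then attack it by specialization in the spirit of the Alexander--Hirschowitz theorem. Let $N := \binom{n+d}{d}$ and let $\Gamma \subseteq \PP^n$ be a generic set of $j$ points. The linear series $V := I(\Gamma)_d$ has dimension $N - j$ by the Hilbert function of generic points (for $j \leq N$), and under this identification Theorems~\ref{thm:DimP} and~\ref{thm:DimS} give
\[
\dim P(\Gamma) = \binom{n+2d}{2d} - j(n+1), \qquad \dim \Sigma(\Gamma) = \dim(V\cdot V),
\]
where $V\cdot V$ is the image of the multiplication map $\Sym^2 V \to R_{2d}$. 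Since the inclusion $\Sigma(\Gamma)\subseteq P(\Gamma)$ together with the surjectivity of $\Sym^2 V\to V\cdot V$ yields $\dim(V\cdot V)\leq \min\{\binom{N-j+1}{2}, \dim P(\Gamma)\}$, Conjecture~\ref{conjecture} is equivalent to the \emph{maximal-rank equality}
\[
\dim(V\cdot V) = \min\!\left\{ \binom{N-j+1}{2},\ \binom{n+2d}{2d} - j(n+1) \right\}.
\]

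Granting this equality, both parts of the conjecture follow by elementary manipulations. Part (2) is immediate, and Part (1) is obtained by solving the quadratic inequality $\binom{n+2d}{2d} - j(n+1) - \binom{N-j+1}{2} > 0$ in $j$; after simplification its larger root is precisely the expression inside the ceiling. Moreover, Theorem~\ref{gapVector} (4)(b)--(c) substantially reduces the workload: once $\Sym^2 V \hookrightarrow R_{2d}$ is shown to be injective at the single critical index $j = \bar j$, injectivity (and hence the claimed formula for $g_j$) propagates to all $j\geq \bar j$. A dual maximal-rank statement at $j = \bar j - 1$, namely that $V\cdot V$ fills out the full degree-$2d$ piece of the ideal of the fat-point scheme $2\Gamma$, then forces $g_{\bar j -1}=0$, and monotonicity (Theorem~\ref{gapVector} (1)) yields $g_j = 0$ throughout the range $j < \bar j$.

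The main obstacle is establishing these two maximal-rank assertions at the adjacent cardinalities $\bar j - 1$ and $\bar j$. I would approach them via lower semicontinuity of the rank of $\Sym^2 V \to R_{2d}$: it suffices to exhibit, for each $(n,d)$, a specialization of the relevant number of points at which the rank is maximal. The natural tool is the Horace differential method of Alexander and Hirschowitz, in which one degenerates some of the points onto a hyperplane $H\subseteq \PP^n$ and uses the Castelnuovo exact sequence to reduce to smaller instances of the same question in $\PP^{n-1}$, setting up an induction on the pair $(n,d)$. Dually, under the apolarity pairing the rank computation translates into a statement about an explicit catalecticant matrix acting on $\Sym^{2d}\mathbb{R}^{n+1}$, and the threshold $\bar j$ coincides with the cardinality at which this catalecticant transitions from surjective to injective generic behavior.

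The hardest step will be uniformly handling the Alexander--Hirschowitz exceptional triples $(n, 2d, j)$. A crucial sub-verification is that the critical values $j = \bar j -1$ and $j = \bar j$ do not fall inside the (finite, well-documented) exceptional locus within the range $j \leq \codim(X)$; for any exceptions that do, a tailored argument is needed. Such a situation is already visible in Theorem~\ref{ThmPP2}, where the proof avoids a direct rank computation by exploiting the explicit value of $\epsilon(X)$ together with a telescoping identity along the gap vector. A uniform proof in all $(n, d)$ would likely combine this telescoping perspective with the Horace method, but circumventing a finite case analysis altogether appears to require a genuinely new idea.
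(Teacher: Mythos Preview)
The statement you are addressing is Conjecture~\ref{conjecture}, and the paper does \emph{not} prove it. It is stated as an open conjecture; the only evidence the paper offers is the remark immediately following it, reporting that the first differences of the conjectural formula have been checked to sum to $\epsilon(X)$ for $n\in\{3,4\}$, $2\leq d\leq 100$ and for $2d=4$, $4\leq n\leq 100$. There is therefore no ``paper's own proof'' to compare against.

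Your reformulation is correct and useful: with $V=I(\Gamma)_d$ and $N=\binom{n+d}{d}$, Theorems~\ref{thm:DimP} and~\ref{thm:DimS} indeed give $g_j(X)=\binom{n+2d}{2d}-j(n+1)-\dim(V\!\cdot\! V)$, and the conjecture is equivalent to the maximal-rank equality $\dim(V\!\cdot\! V)=\min\bigl\{\binom{N-j+1}{2},\,\binom{n+2d}{2d}-j(n+1)\bigr\}$. Your reduction to two assertions at the adjacent cardinalities $\bar j-1$ and $\bar j$, with propagation via monotonicity and Theorem~\ref{gapVector}(4), is also sound.

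What remains is precisely the hard part, and here your proposal is a strategy rather than a proof. The maximal-rank statement you isolate is not the Alexander--Hirschowitz theorem itself: AH controls $\dim I(2\Gamma)_{2d}$, i.e.\ the target of the multiplication map, whereas you need the rank of $\Sym^2 V\to R_{2d}$. This question---whether $I(\Gamma)_d\cdot I(\Gamma)_d$ has the expected dimension for generic $\Gamma$---is essentially a case of the Ideal Generation Conjecture / Minimal Resolution Conjecture for generic points, which is open for general $(n,d)$. The \emph{m\'ethode d'Horace} is a reasonable line of attack, but you should be aware that the difficulty is not merely the finite AH exceptional list (those concern the target dimension, already handled by Theorem~\ref{thm:DimP}); the genuine obstruction is producing, uniformly in $(n,d)$, a specialization at the two critical cardinalities for which the multiplication map has full rank. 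Your closing sentence acknowledges this honestly: as it stands, the proposal identifies the right problem but does not solve it.
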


\begin{remark}
In the proof of Theorem \ref{ThmPP2}, a key step was to establish the fact that the differences $g_j - g_{j-1}$ of the gap vector sum up to the quadratic deficiency $\varepsilon(X)$. We have verified experimentally for $n \in \{3,4\}$ and $2 \leq d\leq 100$ as well as for $2d = 4$ and $4\leq n\leq 100$ that the differences given in Conjecture \ref{conjecture} also sum up to $\varepsilon(X) = \binom{n+2d}{2d} - (n+1)\binom{n+d}{d} + \binom{n+1}{2}$.
\end{remark}

\begin{bibdiv}
\begin{biblist}

\bib{B}{article}{
  label={Bec},
  author={Becker, Eberhard},
  title={Valuations and real places in the theory of formally real fields},
  conference={
    title={Real algebraic geometry and quadratic forms},
    address={Rennes},
    date={1981},
  },
  book={
    series={Lecture Notes in Math.},
    volume={959},
    publisher={Springer},
    place={Berlin},
  },
  date={1982},
  pages={1--40},
}	

\bib{Blekherman}{article}{
  label={Ble},
  author={Blekherman, Grigoriy},
  title={\href{http://dx.doi.org/10.1090/S0894-0347-2012-00733-4}%
    {Nonnegative polynomials and sums of squares}},
  journal={J. Amer. Math. Soc.},
  volume={25},
  date={2012},
  number={3},
  pages={617--635},
  issn={0894-0347},
}

\bib{BPT}{book}{
  author={Blekherman, Grigoriy},
  author={Parrilo, Pablo A.},
  author={Thomas, Rekha R.},
  title={Semidefinite optimization and convex algebraic geometry},
  series={MOS-SIAM Series on Optimization},
  volume={13},
  publisher={Society for Industrial and Applied Mathematics (SIAM)},
  place={Philadelphia, PA},
  date={2013}
}

\bib{BIK}{article}{
  author={Blekherman, Grigoriy},
  author={Iliman, Sadik},
  author={Kubitzke, Martina},
  title={Dimensional differences between faces of the cones of nonnegative
    polynomials and sums of squares},
  note={available at \href{http://arxiv.org/abs/1305.0642}%
  {\texttt{arXiv:1305.0642 [math.AG]}}}
}

\bib{BSV}{article}{
  author={Blekherman, Grigoriy},
  author={Smith, Gregory},
  author={Velasco, Mauricio},
  title={Sums of squares and varieties of minimal degree},
  note={available at \href{http://arxiv.org/abs/1308.0751}%
  {\texttt{arXiv:1308.0751 [math.AG]}}}
}

\bib{BSc}{article}{
   author={Brodmann, Markus},
   author={Schenzel, Peter},
   title={Arithmetic properties of projective varieties of almost minimal
   degree},
   journal={J. Algebraic Geom.},
   volume={16},
   date={2007},
   number={2},
   pages={347--400},
   issn={1056-3911},
   review={\MR{2274517 (2008b:14085)}},
   doi={10.1090/S1056-3911-06-00442-5},
}

\bib{Pezzo}{article}{
  label={dPz},
  author={del Pezzo, Pasquale},
  title={Sulle superficie di ordine $n$ immerse nello spazio di $n+1$ dimensioni},
  journal={Rendiconto dell'Accademia delle Scienze Fisiche e Matematiche, Napoli},
  volume={24},
  date={1885},
  pages={212--216},
}

\bib{EisenbudHarris}{article}{
  author={Eisenbud, David},
  author={Harris, Joe},
  title={On varieties of minimal degree (a centennial account)},
  conference={
    title={Algebraic geometry, Bowdoin, 1985},
    address={Brunswick, Maine},
    date={1985},
  },
  book={
    series={Proc. Sympos. Pure Math.},
    volume={46},
    publisher={Amer. Math. Soc.},
    place={Providence, RI},
  },
  date={1987},
  pages={3--13}
}

\bib{Fujita}{article}{
   author={Fujita, Takao},
   title={Projective varieties of $\Delta$-genus one},
   conference={
      title={Algebraic and topological theories},
      address={Kinosaki},
      date={1984},
   },
   book={
      publisher={Kinokuniya, Tokyo},
   },
   date={1986},
   pages={149--175},
   review={\MR{1102257}},
}

\bib{GKZ}{book}{
   author={Gelfand, I. M.},
   author={Kapranov, M. M.},
   author={Zelevinsky, A. V.},
   title={Discriminants, resultants and multidimensional determinants},
   series={Modern Birkh\"auser Classics},
   note={Reprint of the 1994 edition},
   publisher={Birkh\"auser Boston Inc.},
   place={Boston, MA},
   date={2008},
   pages={x+523},
   isbn={978-0-8176-4770-4},
}

\bib{H}{article}{
   author={Harris, Joe},
   title={The genus of space curves},
   journal={Math. Ann.},
   volume={249},
   date={1980},
   number={3},
   pages={191--204},
   issn={0025-5831},
   doi={10.1007/BF01363895},
}

\bib{Hilbert}{article}{
  label={Hil},
  author={Hilbert, David},
  title={\href{http://dx.doi.org/10.1007/BF01443605}%
    {\"{U}ber die Darstellung definiter Formen als Summe von Formenquadraten}},
  journal={Math. Ann.},
  volume={32},
  date={1888},
  number={3},
  pages={342--350},
}

\bib{J}{book}{
  label={Jou},
  author={Jouanolou, Jean-Pierre},
  title={Th\'eor\`emes de Bertini et applications},
  publisher={Birkh\"auser},
  series={Progress in Mathematics},
  volume={42},
  place={Basel},
  date={1983}
}
\bib{M2}{article}{
          author = {Grayson, D.}
          author= {Stillman, M.},
          title = {Macaulay2, a software system for research in algebraic geometry},
         journal = {Available at 
          \href{http://www.math.uiuc.edu/Macaulay2/}%
          {http://www.math.uiuc.edu/Macaulay2/}}
        }

\bib{Rez}{article}{
author={B.~Reznick},
title={On {H}ilbert's construction of positive polynomials},
note={available at \url{ http://arxiv.org/abs/0707.2156 }%
},
date={2007}
}

\bib{Terracini}{article}{
label={Ter}
author={Terracini, Alessandro},
title={Sulle $V_k$ per cui la variet\'a degli
$S_h-h+1$ seganti ha dimensione minore dell'ordinario},
journal={Rendiconti Circ. Mat. Palermo},
volume={31},
date={1911},
pages={392--396},
}

\bib{Zak}{article}{
  label={Zak},
  author={Zak, Fedor L.},
  title={\href{http://dx.doi.org/10.1007/s002080050271}%
    {Projective invariants of quadratic embeddings}},
  journal={Math. Ann.},
  volume={313},
  date={1999},
  number={3},
  pages={507--545},
}

\end{biblist}
\end{bibdiv}

\end{document}